\newcommand{\lcqKnospace}{\textsc{lcqK}}
\newcommand{\lcqK}{\lcqKnospace\ }
\newcommand{\lcpnospace}{\textsc{lcp}}
\newcommand{\lcp}{\lcpnospace\ }
\newcommand{\Id}{\mathop{\mathrm{Id}}}
\newcommand{\Cl}[1]{\mathrm{Cl}(#1)}
\newcommand{\Isom}[1]{\ensuremath{\text{\upshape\rmfamily Isom}(#1)}}
\newcommand{\Spin}[1]{\ensuremath{\text{\upshape\rmfamily Spin}(#1)}}
\newcommand{\Spindelta}[1]{\ensuremath{\text{\upshape\rmfamily Spin}(#1)_\Delta}}
\newcommand{\SO}[1]{\ensuremath{\text{\upshape\rmfamily SO}(#1)}}
\newcommand{\U}[1]{\ensuremath{\text{\upshape\rmfamily U}(#1)}}
\newcommand{\Gtwo}{\ensuremath{\text{\upshape\rmfamily G}_2}}
\newcommand{\SU}[1]{\ensuremath{\text{\upshape\rmfamily SU}(#1)}}
\newcommand{\GL}[1]{\ensuremath{\text{\upshape\rmfamily GL}(#1)}}
\newcommand{\End}[1]{\ensuremath{\text{\upshape\rmfamily End}(#1)}}
\newcommand{\Sp}[1]{\ensuremath{\text{\upshape\rmfamily Sp}(#1)}}
\newcommand{\Spdiag}[1]{\ensuremath{\text{\upshape\rmfamily Sp}(#1)_\Delta}}
\newcommand{\ug}{\;\shortstack{{\tiny\upshape def}\\=}\;}
\newcommand{\snform}{\ensuremath{\Phi}}
\newcommand{\spinform}[1]{\ensuremath{\Phi_{\Spin{#1}}}}
\newcommand{\cyclic}[1]{\ensuremath{\text{\upshape\rmfamily C}_{#1}}}
\newcommand{\bid}[1]{\ensuremath{\text{\upshape\rmfamily D}_{#1}}}
\newcommand{\bit}{\ensuremath{\text{\upshape\rmfamily T}}}
\newcommand{\bio}{\ensuremath{\text{\upshape\rmfamily O}}}
\newcommand{\bii}{\ensuremath{\text{\upshape\rmfamily I}}}
\newcommand{\liespin}[1]{\mathop{\mathfrak{spin}}(#1)}
\newcommand{\liesp}[1]{\mathop{\mathfrak{sp}}(#1)}
\newcommand{\lieso}[1]{\mathop{\mathfrak{so}}(#1)}
\newcommand{\lie}[1]{\mathop{\mathfrak{#1}}}
\newcommand{\st}{\vert}
\newcommand{\HH}{\mathbb{H}}
\newcommand{\RR}{\mathbb{R}}   
\newcommand{\ZZ}{\mathbb{Z}} 
\newcommand{\NN}{\mathbb{N}}
\newcommand{\OO}{\mathbb{O}}
\newcommand{\CP}[1]{\mathbb{C}P^{#1}}
\newcommand{\OP}[1]{\mathbb{O}P^{#1}}
\newcommand{\OH}[1]{\mathbb{O}H^{#1}}
\newcommand{\HP}[1]{\mathbb{H}P^{#1}}
\newcommand{\II}{\textrm{Im}}
\newcommand{\I}{\mathcal{I}} %simbolo per le strutture reali
\numberwithin{equation}{section}
\theoremstyle{plain}
\newtheorem{te}{Theorem}[section]
\newtheorem*{te*}{Theorem}
\newtheorem*{teA}{\hyperlink{pr:proofA}{Theorem A}}
\newtheorem*{teB}{\hyperlink{pr:proofB}{Theorem B}}
\newtheorem*{teC}{\hyperlink{pr:proofC}{Theorem C}}
\newtheorem{pr}[te]{Proposition}
\newtheorem{co}[te]{Corollary}
\newtheorem{lm}[te]{Lemma}
\newtheorem*{triality}{\textbf{\upshape\rmfamily Triality Principle}}
\theoremstyle{remark}
\newtheorem{re}[te]{Remark}
\theoremstyle{definition}
\newtheorem{de}[te]{Definition}
\newtheorem{ex}[te]{Example}
\newenvironment{proofA}{\textbf{Proof of \hyperlink{te:A}{Theorem A}.}}{\hfill\qed}
\newenvironment{proofB}{\textbf{Proof of \hyperlink{te:B}{Theorem B}.}}{\hfill\qed}
\newenvironment{proofC}{\textbf{Proof of \hyperlink{te:C}{Theorem C}.}}{\hfill\qed}
\begin{document}

\title{\text{\upshape\rmfamily Spin}\textbf{(9)} geometry of the octonionic Hopf fibration}

\subjclass[2010]{Primary 53C26, 53C27, 57R25.}
\keywords{Hopf fibration, $\Spin{9}$, octonions, locally conformally parallel.}
\date{}

\author{Liviu Ornea}
\address{University of Bucharest\\ Faculty of Mathematics and Informatics\\ 14 Academiei str.\\ 70109 Bucharest, Romania\\
and Institute of Mathematics ``Simion Stoilow'' of the Romanian Academy\\ 21 Calea Grivitei Str.\\ 010702 Bucharest, Romania}
\email{\href{mailto:lornea@fmi.unibuc.ro}{lornea@fmi.unibuc.ro}}

\author{Maurizio Parton}
\address{Universit\`a di Chieti-Pescara\\ Dipartimento di Economia\\ Viale Pindaro 87\\ I-65127 Pescara, Italy}
\email{\href{mailto:parton@unich.it}{parton@unich.it}}

\author{Paolo Piccinni}
\address{Dipartimento di Matematica\\ Sapienza - Universit\`a di Roma\\
Piazzale Aldo Moro 2\\ I-00185, Roma, Italy}
\email{\href{mailto:piccinni@mat.uniroma1.it}{piccinni@mat.uniroma1.it}}

\author{Victor Vuletescu}
\address{University of Bucharest\\ Faculty of Mathematics and Informatics\\ 14 Academiei str.\\ 70109 Bucharest, Romania}
\email{\href{mailto:vuli@fmi.unibuc.ro}{vuli@fmi.unibuc.ro}}

\thanks{L.~O.\ and V.~V.\ were partially supported by CNCS UEFISCDI, project number
PN-II-ID-PCE-2011-3-0118 and by the INdAM-GNSAGA visiting program.}
\thanks{M.~P.\ and P.~P.\ were supported by the MIUR under the 2010-11 PRIN Project ``Variet\`a reali e complesse: geometria, topologia e analisi armonica''.}

\maketitle

\begin{abstract}
We deal with Riemannian properties of the octonionic Hopf fibration $S^{15}\rightarrow S^8$, in terms of the structure given by its symmetry group $\Spin{9}$. In particular, we show that any vertical vector field has at least one zero, thus reproving the non-existence of $S^1$ subfibrations. We then discuss $\Spin{9}$-structures from a conformal viewpoint and determine the structure of compact locally conformally parallel $\Spin{9}$-manifolds. Eventually, we give a list of examples of locally conformally parallel $\Spin{9}$-manifolds.
\end{abstract}

\section{Introduction}\label{sec:intro}

There are some features that distinguish $S^{15}$ among spheres of arbitrary dimension. For example, $S^{15}$ is the only sphere that admits three homogeneous Einstein metrics (see~\cite{ZilHEM}), and the only one that appears as regular orbit in three cohomogeneity one actions on projective spaces, namely of $\SU{8}$, $\Sp{4}$ and $\Spin{9}$ on $\CP{8}$, $\HP{4}$ and $\OP{2}$ respectively (see~\cite{KolCHC}). Moreover, according to a famous problem of vector fields on spheres, $S^{15}$ is the lowest dimensional sphere with more than 7 linearly independent vector fields (cf.~for example~\cite{HusFiB}). Finally, it has been shown that the Killing superalgebra of $S^{15}$ is isomorphic to the exceptional compact real Lie algebra $\lie{e}_8$ (see~\cite{FigGCE}).

All of these features can somehow be traced back to the transitive action of the subgroup $\Spin{9}\subset\SO{16}$ on the octonionic Hopf fibration $S^{15}\rightarrow S^{8}$. This latter has a quite exceptional character: it does not admit any $S^1$-subfibration (see~\cite{LoVHFS}), and there is no Hopf fibration over the Cayley projective plane $\OP{2}$, although its volume is the quotient of those of the spheres $S^{23}$ and $S^7$, natural candidates to its possible total space and fiber (cf.~\cite[page 8]{BerCCP}).

The mentioned characterizations of $S^{15}$ and the role of $\Spin{9}$ in 16-dimensional Riemannian geometry have been a first motivation for the present paper.

In this respect, a first result we get is the following:

\begin{teA}\hypertarget{te:A}{}
Any global vector field on $S^{15}$ which is tangent to the fibers of the octonionic Hopf fibration $S^{15} \rightarrow S^{8}$ has at least one zero.
\end{teA}

Note that the non-existence of $S^1$-subfibrations follows (cf.\ results obtained in~\cite{LoVHFS} and Corollary~\ref{cor}).

A second motivation for this paper is to complete a general scheme of description for metrics which are locally conformally parallel with respect to the $G$-structures that refer to Riemannian holonomies.
We next recall this general scheme. We say that we have a {\em locally conformally parallel $G$-structure} on a manifold $M$ if one has  a Riemannian metric $g$ on $M,$ a covering ${\mathcal U}=\{U_\alpha\}_{\alpha \in A}$ of $M,$ and for each $\alpha \in A$ a metric $g_\alpha$ defined on $U_\alpha$ which has holonomy contained in $G$ such that the restriction of $g$ to each $U_\alpha$ is conformal to $g_\alpha$:
$$g_{\vert U_\alpha}=e^{f_\alpha}g_\alpha$$
for some smooth map $f_\alpha$ defined on $U_\alpha.$

Some of the possible  cases here are:

\begin{itemize}
\item $G=\U{n}$, where we have the {\em locally conformally K\"ahler metrics};

\item $G=\Sp{n}\cdot \Sp{1}$, yielding the {\em locally conformally quaternion K\"ahler metrics};

\item $G=\Spin{9},$ which is the case we are dealing with.
\end{itemize}

In any of the cases above, one can show that for each overlapping $U_\alpha, U_\beta$ the functions $f_\alpha, f_\beta$ differ by a constant: 
$$f_\alpha-f_\beta=c_{\alpha, \beta}\,\, \text{on}\,\, U_\alpha\cap U_\beta.$$
This implies that  $df_\alpha=df_\beta$ on $U_\alpha \cap U_\beta\neq \emptyset$, hence defining a global, closed 1-form, usually denoted by $\theta$ and called the \emph{Lee form}.
Its metric dual with respect to $g$ is denoted by $B$:
$$ B=\theta^{\sharp}$$ 
and is called the \emph{Lee vector field.}
 
The  case $G=\U{n}$ is extensively studied: see for instance \cite{DrOLCK}. 

Choosing  $G$ to be  $\Sp{n}$ or  $\Sp{n}\cdot\Sp{1}$, we get  close relations to $3$-Sasakian geometry: see~\cite{OrPLCK} or the surveys~\cite{BoGTSM},~\cite{CaPEWG}. Finally, locally conformally parallel $\Gtwo$ and $\Spin{7}$-structures have been studied in~\cite{IPPLCP}, and they relate to nearly parallel $\SU{3}$ and $\Gtwo$ geometries, respectively. 

In the case we deal with in this paper, it is a classical result by D.~Alekseevsky that holonomy $\Spin{9}$ is only possible on manifolds that are either flat or locally isometric to $\OP{2}$ or to the hyperbolic Cayley plane $\OH{2}$ (see~\cite{AleRSE} and~\cite{BrGRMH}). Still,  weakened holonomy conditions have been also considered.  In particular, the article~\cite{FriWSS} points out how, exactly like in the frameworks of structure groups $\U{n}$ and $\Gtwo$, one can obtain 16 classes of $\Spin{9}$-structures. 

One of these classes consists of structures \emph{of vectorial type} (see~\cite{AgFGSV} and~\cite[page~148]{FriWSS}); we show that this class fits into the locally conformally parallel scheme above (see Remark~\ref{re:victor}). 

Besides this Remark, our contribution to the completion of the above general scheme with the case $G=\Spin{9}$ consists in  the following Theorems.

%~ This enters into a general scheme of description for metrics which are locally conformally parallel with respect to the $G$-structures that refer to Riemannian holonomies. Here the main significative case is $G=\U{n}$, that is to say, the extensively studied locally conformally K\"ahler manifolds (cf.~\cite{DrOLCK}, the periodically updated survey~\cite{OrnLCK}, and also~\cite{GOPLCK} and~\cite{GOPRVS}). Other choices for $G$ are $\Sp{n}$ and $\Sp{n}\cdot\Sp{1}$, with a close relation to $3$-Sasakian geometry: see~\cite{OrPLCK},~\cite{GOPRVS} and the surveys~\cite{BoGTSM},~\cite{CaPEWG}. Finally, locally conformally parallel $\Gtwo$ and $\Spin{7}$-structures have been studied in~\cite{IPPLCP}, and they relate to nearly parallel $\SU{3}$ and $\Gtwo$ geometries, respectively.
%~ 

\begin{teB}\hypertarget{te:B}{}
Let $M^{16}$ be a compact manifold equipped with a locally, non globally, conformally parallel $\Spin{9}$ metric $g$. Then:
\begin{enumerate}
\item\label{te:st1} The Riemannian universal covering $(\tilde{M},\tilde{g})$ of $M$ is conformally equivalent to the Euclidean space ${\mathbb R}^{16}\setminus \{0\}$, that is, the Riemannian cone over $S^{15}$, and
%$M$ is locally isometric, up to homotheties, to $S^{15}\times\RR$.
$M$ is finitely isometrically covered by $S^{15} \times \RR$.
\item\label{te:st2} $M$ is equipped with a  canonical $8$-dimensional foliation.
%Riemannian and totally geodesic $8$-dimensional foliation. 
%spanned by the Lee vector field $B$ and by the local vector fields that are orthogonal to $\I_1 B,\dots,\I_9 %B$ . 
\item\label{te:st3} 
If all the leaves of $\mathcal F$ are compact, then M fibers over an orbifold $\mathcal O^8$ finitely covered by $S^8$ and all fibers are finitely covered by $S^7 \times S^1$.
\end{enumerate}
\end{teB}

%~ \comment{scegliere una delle 2 versioni seguenti del teorema C}
%~ 
%~ \begin{te*}[Theorem~\ref{te:C} and Corollary~\ref{co:structure}]
%~ Let $(M^{16},g)$ be a Riemannian manifold. Then $M$ admits a locally, non globally, conformally parallel $\Spin{9}$-structure if and only if $M$ is a fiber bundle over $S^1$ with fibers $15$-dimensional spherical space forms (satisfying certain additional properties, see the statement of~Theorem~\ref{te:C} for details).
%~ \end{te*}

\begin{teC}\hypertarget{te:C}{}
Let $(M,g)$ be a compact Riemannian manifold. Then $(M,g)$ is locally, non globally, conformally parallel $\Spin{9}$ if and only if the following three properties are satisfied:
\begin{enumerate}
\item\label{it:structure1} $M$ is the total space of a fiber bundle
\[
M\stackrel{\pi}{\longrightarrow} S^1_r
\]
where $\pi$ is a Riemannian submersion over the circle of a certain radius $r$.
\item\label{it:structure2} The fibers of $\pi$ are isometric to a 15-dimensional spherical space form $S^{15}/K$, where $K\subset\Spin{9}$.
\item\label{it:structure3} The structure group of $\pi$ is contained in the normalizer $N_{\Spin{9}}(K)$ of $K$ in $\Spin{9}$ (that is, the isometries of $S^{15}/K$ induced by $\Spin{9}$).
\end{enumerate}
\end{teC}

\section{Preliminaries}\label{preliminaries}

Let $\OO$ be the algebra of octonions. The multiplication of $x=h_1+h_2 e$, $x'=h'_1+h'_2e \in \OO$
is defined through the one in quaternions $\HH$ by the Cayley-Dickson process:
\begin{equation}\label{oct}
xx'=(h_1h'_1-\overline h'_2 h_2) + (h_2 \overline h'_1 + h'_2 h_1)e,
\end{equation}
where $\overline h'_1, \overline h'_2$ are the conjugates of $h'_1, h'_2 \in \HH$. The conjugation in $\OO$ is defined by $\overline{x}=\overline h_1-h_2e$
and relates with the non-commutativity in $\OO$ by $\overline{x x'}=\overline x'\overline x.$ The non-associativity of $\OO$ gives rise to the associator 
\[
[x,x',x''] = (xx')x''-x(x'x''),
\] 
that vanishes whenever two among $x,x',x''$ are equal or conjugate. For a survey on octonions and their applications in geometry, topology and mathematical physics, see~\cite{BaeOct}.

We recall in particular the decomposition of the real vector space $\OO^2$ into its \emph{octonionic lines}  
\[
l_m\ug\{(x,mx)\st x\in\OO\} \quad \text{or} \quad l_\infty\ug\{(0,x')\st x'\in\OO\},
\] 
that intersect each other only in $(0,0) \in \OO^2$ (cf.\ Section \ref{versus Hopf}). Here $m \in S^8 =  \OP{1} = \OO \cup \{\infty\}$ parametrizes the set of octonionic lines $l$, whose volume elements $\nu_{l} \in \Lambda^8 l$ allow to define the following \emph{canonical 8-form} on $\OO^2 = \RR^{16}$:
\[
\spinform{9}\ug \int_{\OP{1}}p_l^*\nu_l\,dl\in\Lambda^8(\RR^{16}), 
\]
where $p_l$ denotes the orthogonal projection $\OO^2 \rightarrow l$. This definition of $\spinform{9}$ is due to M.~Berger (cf.\ \cite{BerCCP}). The following statement motivates our choice of notation for the canonical 8-form:

\begin{pr}\label{de:spin9} \cite[Proposition 1.4 at page 170]{CorASH}
The subgroup of $\mathrm{GL}(16, \RR)$ preserving $\spinform{9}$ is the image of $\Spin{9}$ under its spin representation into $\RR^{16}$.
\hfill\qed
\end{pr}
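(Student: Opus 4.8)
The plan is to prove both inclusions directly. Write $G=\{g\in\GL{16,\RR}:g^{*}\spinform{9}=\spinform{9}\}$; this is cut out by polynomial equations in the matrix entries, so it is a closed, hence Lie, subgroup of $\GL{16,\RR}$. Let $\rho\colon\Spin{9}\to\SO{16}$ denote the spin representation. The inclusion $\rho(\Spin{9})\subseteq G$ is the routine one: through the covering $\Spin{9}\to\SO{9}$ the group $\Spin{9}$ acts on $\RR^{9}=\RR\oplus\OO$, hence on the sphere $S^{8}=\OP{1}$ parametrizing the octonionic lines, and its spin action on $\RR^{16}=\OO^{2}$ permutes the octonionic lines compatibly with this $\SO{9}$-action, carrying each oriented $8$-plane $l_{m}$ isometrically onto $l_{g\cdot m}$ (with $+$ sign, since $\Spin{9}$ is connected and $\rho$ lands in $\SO{16}$). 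As $dl$ is the $\SO{9}$-invariant measure on $S^{8}$, the change of variables $l\mapsto g\cdot l$ in Berger's integral defining $\spinform{9}$ gives $\rho(g)^{*}\spinform{9}=\spinform{9}$.

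For the reverse inclusion I would first compute the Lie algebra of $G$, namely $\got g=\{A\in\got{gl}(16,\RR):A\cdot\spinform{9}=0\}$, where $A$ acts on $\Lambda^{8}\RR^{16}$ as a derivation; by the previous paragraph $\liespin{9}\subseteq\got g$, and the crux is equality. Here I would use the representation theory of $\Spin{9}$: writing $\Delta=\RR^{16}$ for the spin module and identifying $\got{gl}(16,\RR)$ with $\mathrm{End}(\Delta)$, one has the decomposition into pairwise non-isomorphic irreducible $\Spin{9}$-submodules
\[
\got{gl}(16,\RR)\;\cong\;\Lambda^{2}\RR^{9}\ \oplus\ \Lambda^{3}\RR^{9}\ \oplus\ \Lambda^{0}\RR^{9}\ \oplus\ \Lambda^{1}\RR^{9}\ \oplus\ \Lambda^{4}\RR^{9},
\]
of dimensions $36,84,1,9,126$, in which the first summand is $\liespin{9}$. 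The map $A\mapsto A\cdot\spinform{9}$ is $\Spin{9}$-equivariant and vanishes on $\liespin{9}$, so by Schur's lemma the claim $\got g=\liespin{9}$ is equivalent to showing that it is nonzero on each of the four complementary summands. The $\Lambda^{0}$-case is immediate, since $\Id$ acts on $\Lambda^{8}$ by multiplication by $8$; the three cases $\Lambda^{1}\RR^{9}$, $\Lambda^{3}\RR^{9}$, $\Lambda^{4}\RR^{9}$ are finite non-vanishing checks to be carried out on an explicit algebraic model of $\spinform{9}$ — for instance its expression in terms of the K\"ahler $2$-forms of a Clifford system, or by symbolic computation. This yields $\got g=\liespin{9}$, hence $G^{0}=\rho(\Spin{9})$.

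It then remains to pass from $G^{0}$ to $G$, which is pure group theory. Since $G^{0}=\rho(\Spin{9})$ is normal in $G$, we have $G\subseteq N_{\GL{16,\RR}}(\rho(\Spin{9}))$. The spin representation of $\Spin{9}$ is faithful and irreducible of real type, so its commutant in $\GL{16,\RR}$ is $\RR^{\times}\Id$; moreover $\mathrm{Out}(\Spin{9})$ is trivial, $\Spin{9}$ being simply connected of Dynkin type $B_{4}$. Hence every element normalizing $\rho(\Spin{9})$ differs from an element of $\rho(\Spin{9})$ by a real scalar, so $N_{\GL{16,\RR}}(\rho(\Spin{9}))=\rho(\Spin{9})\cdot\RR^{\times}$. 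An element $\lambda\,\rho(g)$ of this group fixes the nonzero $8$-form $\spinform{9}$ only if $\lambda^{8}=1$, i.e. $\lambda=\pm1$, and both $\Id$ and $-\Id=\rho(-1)$ already lie in $\rho(\Spin{9})$. Therefore $G=\rho(\Spin{9})$.

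The main obstacle is the non-vanishing step inside the Lie algebra computation: ruling out every infinitesimal symmetry of $\spinform{9}$ outside $\liespin{9}$. Even reduced by Schur's lemma to the three summands $\Lambda^{1}\RR^{9}$, $\Lambda^{3}\RR^{9}$, $\Lambda^{4}\RR^{9}$, this requires an explicit handle on $\spinform{9}$ and an honest computation on each; it is presumably this essentially computational content that accounts for the result being attributed to Corlette rather than proved here.
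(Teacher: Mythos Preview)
The paper does not prove this proposition: note the \verb|\qed| immediately following the statement and the explicit attribution to \cite[Proposition~1.4]{CorASH}; the result is simply quoted from Corlette. So there is no ``paper's own proof'' to compare against, and you correctly surmised this in your final paragraph.

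Your outline is nonetheless sound and is essentially the standard route to such stabilizer computations. The decomposition
\[
\got{gl}(16,\RR)\cong\Delta\otimes\Delta\cong\Lambda^{2}\RR^{9}\oplus\Lambda^{3}\RR^{9}\oplus\Lambda^{0}\RR^{9}\oplus\Lambda^{1}\RR^{9}\oplus\Lambda^{4}\RR^{9}
\]
is correct (the first two summands give $\Lambda^{2}\Delta$, the last three give $S^{2}\Delta$), the pieces are pairwise non-isomorphic, and the Schur reduction to three non-vanishing checks is legitimate. The passage from $G^{0}$ to $G$ is also clean: $\mathrm{Out}(\Spin{9})=1$, the commutant of the real-irreducible spin representation is $\RR^{\times}\Id$, and the nontrivial central element of $\Spin{9}$ does act as $-\Id$ on $\Delta$, so $\lambda=\pm1$ is absorbed. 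The only part left genuinely open is, as you say, the explicit verification that $A\mapsto A\cdot\spinform{9}$ is nonzero on $\Lambda^{1}\RR^{9}$, $\Lambda^{3}\RR^{9}$, $\Lambda^{4}\RR^{9}$; this is straightforward but requires a concrete model of $\spinform{9}$ (e.g.\ the explicit $702$-term expression in \cite{PaPSAC}) and is indeed the content the authors outsource to Corlette.
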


As such, one can look at $\Spin{9}$ as a subgroup of $\SO{16}$. Accordingly, $\Spin{9}$-structures can be considered on 16-dimensional oriented Riemannian manifolds. The following definition collects different approaches that have been used (see \cite{CorASH}, \cite{FriWSS}, \cite{PaPSAC}):
%The simplest way to look at a $\Spin{9}$-structure is as follows.

\begin{de}\label{de:spin9structure}
Let $M$ be a 16-dimensional oriented Riemannian manifold. A \emph{$\Spin{9}$-structure} on $M$ is the datum of any of the following equivalent alternatives.
\begin{enumerate}
\item A rank $9$ vector subbundle $V^9\subset\End{TM}$, locally spanned by endomorphisms
\[
\{\I_\alpha\}_{\alpha=1,\dots 9}
\]
satisfying
\begin{equation}\label{top}
\I^2_\alpha = \Id, \qquad \I^*_\alpha = \I_\alpha, \quad \text{and} \quad \I_\alpha \I_\beta = - \I_\beta \I_\alpha \quad\text{for}\quad \alpha \neq \beta,
\end{equation}
where $\I^*_\alpha$ denotes the adjoint of $\I_\alpha$.
\item An 8-form $\spinform{9}\in\Lambda^8(M)$ which can be locally written as in~\cite[Table~B]{PaPSAC}, for a certain orthonormal local coframe $\{e^1,\dots,e^{16}\}$.
\item A reduction $\mathcal{R}$ of the principal bundle of orthonormal frames on $M$ from $\SO{16}$ to $\Spin{9}$.
\end{enumerate}
\hfill\qed
\end{de}

\begin{re}
From any of the Definitions~\ref{de:spin9structure}, it follows that admitting a $\Spin{9}$-structure depends only on the conformal class of $M$.
\hfill\qed
\end{re}

We describe now the rank 9 vector bundle of endomorphisms when $M$ is the model space $\RR^{16}$. Here $\I_1, \dots, \I_9$ can be chosen as generators of the Clifford algebra $\Cl{9}$, the endomorphisms' algebra of its $16$-dimensional real representation $\Delta_9 = \RR^{16} = \OO^2$.
% Definition~\ref{de:spin9structure} relates then to the following characterization of $\Spin{9}$. 
Accordingly, unit vectors $v \in S^8 \subset \RR^9$ can be viewed, via the Clifford multiplication, as symmetric endomorphisms $v: \Delta_9 \rightarrow \Delta_9$. 

The explicit way to describe this action is by $v = u + r \in S^8$ ($u \in \OO$, $r \in \RR$, $u\overline u + r^2 =1$), acting on pairs $(x,x') \in \OO^2$ by
\begin{equation}\label{HarSpC}
\left(
\begin{array}{c}
x \\
x'
\end{array}
\right)
\longrightarrow
\left(
\begin{array}{cc}
r & R_{\overline u} \\ 
R_u & -r
\end{array}
\right)  
\left(
\begin{array}{c} 
x \\ 
x'
\end{array}
\right),
\end{equation}
where $R_u, R_{\overline u}$ denote the right multiplications by $u, \overline u$, respectively  (cf.~\cite[page 288]{HarSpC}).

A basis of the standard $\Spin{9}$-structure on $\OO^2 = \RR^{16}$ can be written by looking at the action~\eqref{HarSpC} and at the nine vectors 
\[
(0,1),(0,i),(0,j),(0,k),(0,e),(0,f),(0,g),(0,h)\quad\text{and}\quad(1,0) \in S^8 \subset \OO \times \RR = \RR^9.
\]
In this way, one gets the following symmetric endomorphisms:
\[
\I_1=\left(
\begin{array}{c|c}
0 & \Id\\
\hline
\Id & 0
\end{array}
\right),
\I_2=\left(
\begin{array}{c|c}
0 & -R_i\\
\hline
R_i & 0
\end{array}
\right),
\dots,
\I_8=\left(
\begin{array}{c|c}
0 & -R_h\\
\hline
R_h & 0
\end{array}
\right),
\I_9=\left(
\begin{array}{c|c}
\Id & 0\\
\hline
0 & -\Id
\end{array}
\right),
\]
where $R_i,\dots,R_h$ are the right multiplications by the 7 unit octonions $i,\dots,h$. The subgroup $\Spin{9} \subset \SO{16}$ is then characterized as preserving the 9-dimensional vector space
\begin{equation}\label{eq:V9}
V^9\ug<\I_1,\dots,\I_9>\subset\End{\RR^{16}}.
\end{equation}

%\begin{pr}\label{de:spin9} (cf \cite{CorASH})
%The subgroup of $\SO{16}$ preserving $V^9$ is $\Spin{9}.$
%%\hfill\qed
%\end{pr}

\section{The quaternionic Hopf fibration}\label{Quaternionic Hopf}

It is useful to look at $\Spin{9}\subset \SO{16}$ as the octonionic analogue of the quaternionic group $\Sp{2}\cdot\Sp{1}\subset\SO{8}$. A simple aspect of the analogy is given by the symmetry groups of the Hopf fibrations $S^7\rightarrow S^4$ and $S^{15}\rightarrow S^8$, that are $\Sp{2}\cdot\Sp{1}\subset\SO{8}$ and $\Spin{9}\subset\SO{16}$, respectively (see~\cite[pages 183 and 190]{GWZGHF}).

In the symmetry group of the quaternionic Hopf fibration, the two factors $\Sp{2}$ and $\Sp{1}$ act on the basis $S^4$ on the left, and on the $S^3$ fibers on the right, respectively. This action is thus related with the reducibility of the Lie algebra $\liesp{2}\oplus\liesp{1}$ and with the associativity of quaternions. All of this fails for the octonionic Hopf fibration, due to the irreducibility of $\liespin{9}$ and to the non-associativity of octonions.

However, the approach to a $\Spin{9}$-structure on a 16-dimensional manifold $M$ through the vector bundle $V^9\subset\End{TM}$ admits a strict analogy for $\Sp{2}\cdot\Sp{1}$. The same formula \eqref{HarSpC} defines a similar action on the sphere $S^4$ in $\HH^2$, and this can be viewed as defining a $\Sp{2}\cdot\Sp{1}$-structure. An explicit description of a canonical basis $\I_1,\I_2,\I_3,\I_4,\I_5$ of sections of $V^5\subset\End{\HH^2}$ is given by the choices $(r,u)=(0,1),(0,i),(0,j),(0,k),(1,0)$ in equation~\eqref{HarSpC}, where now $u,x,x'\in\HH$, and thus $(r,u)\in S^4$ (cf.~\cite{PaPSAC}). 

The ten compositions $\I_\alpha\I_\beta$, for $\alpha < \beta$, yield complex structures on $\RR^8 = \HH^2$, and a basis of the Lie algebra $\liesp{2}$. 
In particular, the sum of squares of their K\"ahler forms $\omega_{\alpha\beta}$ gives (cf.~\cite[page 329]{PaPSAC}):
\[
\sum_{1\leq \alpha < \beta \leq 5} \omega^2_{\alpha\beta} = -2\Omega_L,
\]
where $\Omega_L$ is the left quaternionic 4-form in $\RR^8$, defined as usual by
\[
\Omega_L \ug \omega^2_{L_i} +  \omega^2_{L_j} + \omega^2_{L_k},
\]
in terms of the K\"ahler forms $\omega$ of the left multiplications $L_i,L_j$ and $L_k$.

Thus, on a Riemannian manifold $M^8$, the datum of a $\Sp{2}\cdot\Sp{1}$-structure can be given through two different approaches. One can simply fix the usual rank 3 vector subbundle $Q^3$ of skew-symmetric elements in $\End{TM}$, whose local generators can be denoted by $I,J,K$. In the model space $\RR^8$, the subgroup of rotations commuting with the standard complex structures $I,J,K$ is $\Sp{2} \subset \SO{8}$, and the second factor $\Sp{1}$ of the reduced structure group here works as the double covering of $\SO{3}$, allowing to change the admissible hypercomplex structure. 

Since both factors of $\Sp{2}\cdot \Sp{1}$ are double coverings of rotation groups - namely of $\SO{5}$ and $\SO{3}$, respectively - one can reverse the role of the two factors. Accordingly, one can follow a different approach to fix a $\Sp{2}\cdot \Sp{1}$ reduction of the structure group on a Riemannian $M^8$. This second approach is what can be called a \emph{quaternionic Hopf structure} (cf.~\cite[page 327]{PaPSAC}), and consists of a vector subbundle $V^5 \subset \End{TM}$ of symmetric elements, whose local bases of sections $\I_\alpha \in \Gamma (V^5)$ $(\alpha = 1, \dots , 5)$ satisfy relations~\eqref{top}. On the model space $\RR^8$, the subgroup of rotations commuting with the standard $\I_1, \dots , \I_5$ is the diagonal $\Sp{1}$ subgroup of $\SO{8}$, and now it is the left factor of $\Sp{2}\cdot \Sp{1}$ to allow admissible five dimensional rotations in the choice of bases of sections in $V^5$. As already recalled, the quaternionic 4-form of $\HH^2 \cong \RR^8$ can be easily written according to both the mentioned approaches.

In Section~\ref{se:lcp}, we will deal with locally conformally parallel $\Spin{9}$-structures. It will be useful to have in mind some known facts for their corresponding $8$-dimensional analogues, Riemannian manifolds $M^8$ whose metric is locally conformally related to metrics with holonomy $\Sp{2}\cdot\Sp{1}$. We rephrase here some of these facts in terms of the rank $5$ vector bundle $V^5 \subset \End{TM}$.

As mentioned in the Introduction, a quaternion Hermitian manifold $(M^8,g)$ is called \emph{locally conformally quaternion K\"ahler} (or \lcqKnospace, briefly) if $g_{\vert U} = e^{f_U} g'_U$ with local quaternion K\"ahler metrics $g'_U$, defined over open neighborhoods $U$ covering $M$. The \emph{Lee form} $\theta$, locally $\theta_{\vert U} = df_U$, allows to characterize globally the \lcqK condition (cf.~\cite[page 643]{OrPLCK}):
\[
d\Omega_L = \theta \wedge \Omega_L, \qquad d\theta=0.
\]
The Levi-Civita connections of local quaternion K\"ahler metrics $g'_U$ glue together to the \emph{Weyl connection} $D$, defined on tangent vector fields $X,Y$ as
\[
D_XY=\nabla_XY-\frac{1}{2}\{\theta(X)Y+\theta(Y)X-g(X,Y)B\},
\]
where $\nabla$ is the Levi-Civita connection of $g$ and $B=\theta^\sharp$ is the \emph{Lee vector field}. Then the \lcqK condition can be viewed as an example of \emph{Einstein-Weyl structure}, i.e.\ the datum of the conformal class $[g]$ of metrics together with the torsion-free connection $D$ satisfying the Einstein condition and preserving both the conformal class $[g]$ and the vector bundle $V^5 \rightarrow M^8$, that is, satisfying $Dg = \theta \otimes g$ and $DV^5 \subset V^5$.

Abundant examples exist in the subclass of $8$-dimensional compact locally conformally hyperk\"ahler manifolds: for instance any product $\mathcal{S}\times S^1$ of a compact 3-Sasakian 7-dimensional manifold $\mathcal{S}$ with a circle, where the former can be chosen having any second Betti number $b_2(\mathcal{S})$ (see~\cite{BGMCTS}). 

However,  \lcqK metrics on compact $M^8$ are either globally conformally quaternion
K\"ahler or locally conformally quaternion K\"ahler with the local quaternion K\"ahler
metrics of vanishing scalar curvature (\cite[page 645]{OrPLCK}), so that the locally quaternion K\"ahler metrics $g'_U$ are necessarily Ricci flat. 

Note that this does not imply the existence of a global hypercomplex structure on $M^8$, even on the open neighborhood where the local hyperk\"ahler metrics $g'_U$ are defined.

In the following, we see how a locally conformally quaternion K\"ahler manifold $M^8$ can be described by looking at the vector bundle $V^5 \rightarrow M^8$, and by using the vector fields $\I_1 B, \dots, \I_5 B$ on $M^8$.

\begin{lm}\label{le:lchKstructure}
Let $M^{8} $ be a compact manifold equipped with a locally, non globally, \lcqK metric $g$. Let $B$ be its Lee vector field and $V^5\subset\End{TM}$ the vector bundle defining the $\Sp{2}\cdot\Sp{1}$-structure, locally spanned by $\I_1,\dots,\I_5$. Then the local vector fields $\I_1B,\dots,\I_5B$ are orthonormal and $B$ belongs to their 5-dimensional distribution $VB$. The orthogonal complement $(VB)^\perp$ is integrable. 
\end{lm}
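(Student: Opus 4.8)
The plan is to work locally on an open set $U$ where the \lcqK structure is trivialized by symmetric endomorphisms $\I_1,\dots,\I_5$ satisfying~\eqref{top}, and to exploit the fact (recalled just above) that on a compact $M^8$ the local quaternion K\"ahler metrics $g'_U$ are Ricci flat, hence hyperk\"ahler. First I would record the basic algebraic identities: since each $\I_\alpha$ is symmetric with $\I_\alpha^2=\Id$ and the $\I_\alpha$ anticommute, for $\alpha\neq\beta$ the products $\I_\alpha\I_\beta$ are skew-symmetric, and $g(\I_\alpha B,\I_\beta B)=g(B,\I_\alpha\I_\beta B)$; for $\alpha\neq\beta$ this equals $-g(B,\I_\beta\I_\alpha B)=-g(\I_\alpha B,\I_\beta B)$, so the vectors $\I_\alpha B$ are mutually orthogonal, while $g(\I_\alpha B,\I_\alpha B)=g(B,B)=1$ after normalizing $B$ (or, keeping $B$ as is, all five have the same length $\|B\|$). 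That $B$ itself lies in $VB=\langle\I_1B,\dots,\I_5B\rangle$ is immediate because $B=\I_1(\I_1 B)$ and $\I_1B\in VB$; so the first two assertions are essentially pure linear algebra once orthonormality of the $\I_\alpha B$ is in hand, modulo the harmless normalization $\|B\|=1$, which I would justify by rescaling the metric in the conformal class or simply absorb a factor.

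The substantive part is integrability of $(VB)^\perp$. Here I would pass to the Weyl connection $D$, which preserves $V^5$ (i.e.\ $DV^5\subset V^5$) and for which the local metrics $g'_U$ are parallel; on the compact case these are Ricci-flat, so their holonomy reduces to $\Sp{2}$ and in particular $D$ annihilates a local hyperk\"ahler triple $I,J,K\in\Gamma(\End{TM})$ spanning the rank-$3$ subbundle $Q^3\subset V^5\otimes V^5$ of skew-symmetric endomorphisms. The key computation is to show that $DB$ is, pointwise, a symmetric endomorphism-valued combination, or more precisely that the covariant derivative $D_XB$ stays controlled enough that the $1$-form dual to $B$ and the forms dual to the $\I_\alpha B$ generate a differential ideal. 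Concretely, one checks $D_XB$ using $Dg=\theta\otimes g$ and the fact that $\theta$ is $D$-parallel up to the standard term, obtaining something like $D_XB=\tfrac12(\|B\|^2 X-\theta(X)B)$ after using $\nabla\theta$ in terms of the Weyl data; then $D(\I_\alpha B)=(D\I_\alpha)B+\I_\alpha(DB)$ with $D\I_\alpha$ valued in $V^5$ gives that the distribution $VB$ is $D$-parallel, hence (the connection being torsion-free) its orthogonal complement is $D$-parallel too, and a $D$-parallel distribution is automatically integrable by the torsion-freeness of $D$ (the bracket of two sections of $(VB)^\perp$ is $D_XY-D_YX$, which again lies in $(VB)^\perp$).

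The main obstacle I anticipate is the bookkeeping showing that $VB$ is $D$-parallel: one must verify that $D\I_\alpha\in\Gamma(V^5)$ interacts correctly with $B$, i.e.\ that $(D_X\I_\alpha)B$ lands in $VB$ and not merely in the image of $V^5$ applied to arbitrary vectors. This is where the specific structure of $V^5$ and the identity $V^5\cdot B=VB$ (together with $\I_\alpha\I_\beta B\in VB$, which follows from $\I_\alpha\I_\beta$ being either $\pm\Id$ or a generator of $Q^3$ that rotates the frame $\I_1B,\dots,\I_5B$ inside $VB$) must be invoked. Once this closure property is established, everything else — orthonormality, $B\in VB$, and integrability of $(VB)^\perp$ — falls out of torsion-freeness and metricity (up to $\theta$) of $D$. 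If the direct computation of $D\I_\alpha$ proves awkward, an alternative is to argue on the universal cover, where the hyperk\"ahler metric $g'$ is globally defined and the $\Sp{2}$-holonomy forces the needed parallelism, then descend; but I would expect the local Weyl-connection computation to be cleanest.
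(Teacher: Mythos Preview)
Your orthonormality argument is fine, but the argument for $B\in VB$ is circular: writing $B=\I_1(\I_1 B)$ merely restates $\I_1^2=\Id$ and does not exhibit $B$ as a linear combination of $\I_1 B,\dots,\I_5 B$. Indeed $\I_1$ does \emph{not} preserve $VB$: for $\alpha\neq 1$ the vector $\I_1(\I_\alpha B)=\I_1\I_\alpha B$ lies in $(VB)^\perp$, not in $VB$ (in the model $\HH^2$ with $B=e_1$ one finds $VB=\langle e_1,e_5,e_6,e_7,e_8\rangle$ while $\I_1\I_2 B=e_2$). The inclusion $B\in VB$ is a genuine identity; the paper obtains it by computing the coefficients $\lambda_\alpha=g(B,\I_\alpha B)$ explicitly and checking $\sum\lambda_\alpha^2=\vert B\vert^4$, exactly as in the proof of Theorem~A for the $\Spin{9}$ case.

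More seriously, your integrability strategy breaks down because $VB$ is \emph{not} $D$-parallel. After passing to the Gauduchon metric so that $\nabla B=0$ and $\vert B\vert=1$, one has $D_X B=-\tfrac12 X$, hence
\[
D_X(\I_\alpha B)=(D_X\I_\alpha)B+\I_\alpha(D_X B)=\sum_\beta a_{\alpha\beta}(X)\,\I_\beta B-\tfrac12\,\I_\alpha X,
\]
and the term $\I_\alpha X$ lies outside $VB$ for generic $X$. (Your auxiliary claim that $\I_\alpha\I_\beta B\in VB$ is likewise false; these vectors actually span $(VB)^\perp$, which is why the paper identifies $(VB)^\perp$ with $\langle IB,JB,KB\rangle$.) So neither $VB$ nor $(VB)^\perp$ is $D$-parallel, and you cannot deduce integrability from parallelism plus torsion-freeness. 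What does work is the direct computation the paper performs in the proof of Theorem~B (restricted to indices $1,\dots,5$): for $X,Y\in(VB)^\perp$ one shows $g(D_X Y,\I_\alpha B)=\tfrac12 g(\I_\alpha Y,X)$, and then the \emph{symmetry} of $\I_\alpha$ gives
\[
g([X,Y],\I_\alpha B)=\tfrac12 g(\I_\alpha Y,X)-\tfrac12 g(\I_\alpha X,Y)=0.
\]
The point is that the obstruction terms cancel upon antisymmetrization in $X,Y$; they do not vanish separately, which is what your parallelism approach would require.
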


\begin{proof}
Consider on $M$ the distribution $\mathcal F$ spanned by the Lee vector field $B$ and its transformation under the (local) compatible almost complex structures. As already mentioned, the whole $\Sp{2}\cdot\Sp{1}$-structure can be given either by a rank 3 vector subbundle $Q^3$ of skew-symmetric elements in $\End{TM}$ (whose local generators are compatible almost complex structures usually denoted by $I,J,K$), or by a vector subbundle $V^5 \subset \End{TM}$ of symmetric elements, whose local generators we denote here by $\I_1,\dots \I_5$.

To prove the statement, there are now two possibilities. The first one is to refer to the work~\cite{OrPLCK}, and to rephrase the integrability of $\mathcal F$, a consequence of Frobenius Theorem in \cite[page 645]{OrPLCK}, in terms of the vector bundle $V^5$.
%~ the integrability result of the vertical 4-dimensional foliation $\mathcal F$ (cf. \cite{OrPLCK}, Theorem 2.1 at page 645). Recall that $\mathcal F$ is spanned by the Lee vector field $B$ and its transformed vector fields under the (local) compatible almost complex structures. As mentioned above, the whole $\Sp{2}\cdot\Sp{1}$ structure can be given either by a rank 3 vector subbundle $Q^3$ of skew-symmetric elements in $\End{TM}$ (whose local generators are usually denoted by $I,J,K$), or by a vector subbundle $V^5 \subset \End{T(M)}$ of symmetric elements, whose local generators we denote here by $\I_1,\dots \I_5$. 
The geometric interplay between the foliation $\mathcal F$ and the distribution $VB$, locally spanned by the vector fields $\I_1B,\dots,\I_5B$, follows from a computation that can be performed in the model space $\RR^8$. This gives rise to the situation described in the statement. The same computation shows that none of the $\I_{\alpha}B$ is in general perpendicular to $B$, and that the orthogonal complement $(VB)^\perp$ is locally spanned by $IB,JB,KB$.

A second way to prove the statement is by a straightforward computation. This will be essentially done later in the proof of \hyperlink{te:B}{Theorem~B}, and more precisely of its statement (2). Although this latter refers to the $\Spin{9}$ case, the same computations, if limited to the choices $1, \dots , 5$, prove the present statement.
%\hfill\qed
\end{proof}

The following Proposition gives now a more complete description of \lcqK manifolds in terms of the vector bundle $V^5$. Again, its proof follows from results in \cite{OrPLCK} (see in particular Theorem 3.8 at page 649).

\begin{pr}
Let $M^{8} $ be a compact manifold equipped with a locally, non globally, \lcqK metric $g$, with the same notation as in Lemma~\ref{le:lchKstructure}.
\begin{enumerate}
\item There exists a metric in the conformal class of $g$ whose Lee form $\theta$ is parallel.
\item On each integral manifold $N^7$ of $\ker(\theta)$, the distribution $(VB)^\perp$, orthogonal in $M$ to $VB$, is integrable and its leaves are 3-dimensional spherical space forms. The distribution on $M$ spanned by $(VB)^\perp$ and $B$ is the 4-dimensional vertical foliation $\mathcal{F}$, whose leaves are \lcqK (generally non primary) Hopf surfaces. 
\item The leaf space $M/\mathcal{F}$, when a manifold or an orbifold, carries a projected positive self-dual Einstein metric.
\end{enumerate}
%\hfill\qed
\end{pr}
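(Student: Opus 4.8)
The plan is to build on the structure theory established in \cite{OrPLCK}, translating it into the language of the vector bundle $V^5$. We are given (statement 1) that we may pass to a metric in the conformal class with parallel Lee form $\theta$; fix such a metric, so that $|B|$ is constant and, after rescaling, $|B| = 1$. The key observation, already recorded in Lemma~\ref{le:lchKstructure}, is that $B$ lies in the distribution $VB$ spanned by $\I_1B, \dots, \I_5B$, and that the five-dimensional distribution $VB$ is $g$-orthogonally complemented by $(VB)^\perp$, locally spanned by $IB, JB, KB$. The foliation $\mathcal F$ is then by definition spanned by $(VB)^\perp$ together with $B$, i.e.\ it is the four-dimensional distribution $\langle B, IB, JB, KB\rangle$.

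First I would verify integrability of $(VB)^\perp$ restricted to a leaf $N^7$ of $\ker\theta$, and the fact that its integral surfaces are $3$-dimensional spherical space forms. On $N^7$ one is in the setting of a $7$-dimensional manifold locally modelled, away from conformal factors, on a $3$-Sasakian space; the distribution $(VB)^\perp \cap TN$ is the span of the three Reeb-type vector fields $IB, JB, KB$, which under the Weyl connection $D$ (or equivalently in the local quaternion Kähler model) generate an integrable $\SO(3)$- or $\SU(2)$-action with totally geodesic orbits of constant curvature — hence the leaves are quotients $S^3/\Gamma$. This is precisely \cite[Theorem 3.8]{OrPLCK} rephrased; the translation amounts to checking that the Lie brackets $[\I_\alpha B, \I_\beta B]$ (for $\alpha,\beta$ among the indices giving $I,J,K$) stay inside $(VB)^\perp$, a computation of the same nature as the one to be carried out in the proof of \hyperlink{te:B}{Theorem~B}.

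Next I would identify the leaves of $\mathcal F$. Since adding the parallel unit field $B$ to the span $\langle IB, JB, KB\rangle$ yields a $4$-dimensional integrable distribution (the flow of $B$ preserves $(VB)^\perp$ because $\theta$ is parallel and $V^5$ is $D$-invariant), each leaf $L^4$ fibres over a $3$-dimensional spherical space form with a one-dimensional factor coming from the $B$-direction, and the local product structure combined with the non-exactness of $\theta$ forces $L$ to be a compact quotient of $S^3\times\RR$ by a group acting with a contracting similarity — that is, a (generally non-primary) Hopf surface, carrying the induced \lcqK structure. This is where one must be slightly careful: the holonomy of $\mathcal F$ around the circle direction can act nontrivially on the $S^3$ factor, which is exactly why ``non-primary'' Hopf surfaces appear; I would handle this by the standard argument that the universal cover of a leaf is $S^3\times\RR \cong \RR^4\setminus\{0\}$ and the deck group is a subgroup of $\RR_{>0}\times\SO(4)$ generated by a similarity, classifying such quotients as Hopf surfaces.

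Finally, for statement (3): on the open set where the local hyperkähler (equivalently, Ricci-flat quaternion Kähler) metrics $g'_U$ are defined, the leaf space $M/\mathcal F$ inherits a metric by the usual Riemannian-submersion-with-totally-geodesic-fibres argument, and O'Neill's formulas together with the Ricci-flatness of $g'_U$ identify the projected metric as positive quaternion Kähler in dimension $4$, i.e.\ self-dual Einstein with positive scalar curvature — this is again \cite[Section 3]{OrPLCK}. I expect the main obstacle to be not any single step but the bookkeeping of conformal factors: one must consistently distinguish the $g$-orthogonal structures (in which $\theta$ is parallel and $VB \perp (VB)^\perp$) from the local $g'_U$-structures (in which the quaternion Kähler geometry lives), and check that the foliations $\mathcal F$ and $(VB)^\perp$, and the submersion onto $M/\mathcal F$, are conformally invariant so that the statements are independent of the representative chosen. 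Once that is set up carefully, each individual assertion reduces to a model computation in $\RR^8$ or to a direct citation of \cite{OrPLCK}.
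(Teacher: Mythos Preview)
Your proposal is correct and follows the same approach as the paper: both reduce everything to the structure theory of \cite{OrPLCK}, in particular Theorem~3.8 there, with the only added ingredient being the translation from the $Q^3$-language (the span of $IB,JB,KB$) to the $V^5$-language via Lemma~\ref{le:lchKstructure}. The paper's own proof is in fact a single sentence citing \cite{OrPLCK}; you have supplied considerably more detail in spelling out how each of the three statements is extracted from that reference, but the route is the same.
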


\section{$\Spin{9}$ and the octonionic Hopf fibration}\label{versus Hopf}

For any $(x,y)\in S^{15}\subset\OO^2=\RR^{16}$, we denote by 
\[
B\ug (x,y)\ug(x_1,\dots,x_8,y_1,\dots,y_8) 
\]
the (outward) unit normal vector field of $S^{15}$ in $\RR^{16}$. Here and in the following, we are identifying the tangent spaces $T_{(x,y)}(\RR^{16})$ with $\RR^{16}$.

Through the involutions $\I_1,\dots,\I_9$ one gets then the following sections of $T(\RR^{16})_{\vert_{S^{15}}}$ of length one:
\begin{equation}\label{spin9B}
\begin{split}
\I_1 B &=(y_1,y_2,y_3,y_4,y_5,y_6,y_7,y_8,x_1,x_2,x_3,x_4,x_5,x_6,x_7,x_8),\\
\I_2 B &=(y_2,-y_1,-y_4,y_3,-y_6,y_5,y_8,-y_7,-x_2,x_1,x_4,-x_3,x_6,-x_5,-x_8,x_7),\\
\I_3 B &=(y_3,y_4,-y_1,-y_2,-y_7,-y_8,y_5,y_6,-x_3,-x_4,x_1,x_2,x_7,x_8,-x_5,-x_6),\\
\I_4 B &=(y_4,-y_3,y_2,-y_1,-y_8,y_7,-y_6,y_5,-x_4,x_3,-x_2,x_1,x_8,-x_7,x_6,-x_5),\\
\I_5 B &=(y_5,y_6,y_7,y_8,-y_1,-y_2,-y_3,-y_4,-x_5,-x_6,-x_7,-x_8,x_1,x_2,x_3,x_4),\\
\I_6 B &=(y_6,-y_5,y_8,-y_7,y_2,-y_1,y_4,-y_3,-x_6,x_5,-x_8,x_7,-x_2,x_1,-x_4,x_3),\\
\I_7 B &=(y_7,-y_8,-y_5,y_6,y_3,-y_4,-y_1,y_2,-x_7,x_8,x_5,-x_6,-x_3,x_4,x_1,-x_2),\\
\I_8 B &=(y_8,y_7,-y_6,-y_5,y_4,y_3,-y_2,-y_1,-x_8,-x_7,x_6,x_5,-x_4,-x_3,x_2,x_1),\\
\I_9 B &=(x_1,x_2,x_3,x_4,x_5,x_6,x_7,x_8,-y_1,-y_2,-y_3,-y_4,-y_5,-y_6,-y_7,-y_8).\\
\end{split}
\end{equation}

As mentioned, $\Spin{9} \subset \SO{16}$ is the group of symmetries of the octonionic Hopf fibration. This latter is defined by looking at the decomposition of $\OO^2$ into the \emph{octonionic lines}  
\[
l_m\ug\{(x,mx)\st x\in\OO\} \quad \text{or} \quad l_\infty\ug\{(0,y)\st y\in\OO\},
\] 
mentioned in Section \ref{preliminaries}. One has to be careful that the octonionic line through $(0,0)$ and $(x,y) \in \OO^2$ \emph{is not} $\{(xo,yo)\st o\in\OO\}$. This latter in fact is not even an octonionic line, the correct line being instead $l_{yx^{-1}} = \{(o,(yx^{-1})o\st o\in\OO\}$ if $x \neq 0$, and $l_\infty$ if $x=0$. In this way the fibration 
\[
\OO^2 \setminus 0 \rightarrow S^8  = \{m \in \OO\} \cup \{\infty\}
\]
is obtained, with fibers $\OO \setminus 0$, and the intersection with the unit sphere $S^{15} \subset \OO^2$ provides the octonionic Hopf fibration 
\[
S^{15} \rightarrow S^8 , \qquad \text{or as homogeneous fibration}\qquad  \frac{\Spin{9}}{\Spin{7}}\stackrel{\frac{\Spin{8}}{\Spin{7}}}{\longrightarrow}\frac{\Spin{9}}{\Spin{8}}.
\]

Denote by $VB$ the 9-dimensional span of $\I_1 B,\dots,\I_9 B$:
\[
VB\ug<\I_1 B,\dots,\I_9 B>,
\]
and note that 9-planes of $VB$ are generally not tangent to $S^{15}$.

\hfill

%We can now prove \hyperlink{te:A}{Theorem~A}.

\noindent\begin{proofA}\hypertarget{pr:proofA}{}
First, note that $VB$ is invariant under $\Spin{9}$: this is clear for the unit normal $B=(x,y)$, since $\Spin{9}\subset\SO{16}$, and on the other hand the nine endomorphisms $\I_\alpha$ are rotating under the $\Spin{9}$ action inside their vector space $V^9 \subset \End{\RR^{16}}$.

%\textcolor{red}{Acum verificam ca $B$ este in $VB.$}

Next, $VB$ contains $B$. In fact:
\[
B = \lambda_1 \I_1 B + \lambda_2 \I_2 B + \dots + \lambda_8 \I_8 B + \lambda_9 \I_9 B,
\]
where the coefficients $\lambda_\alpha$ can be computed from \ref{spin9B}  in terms of the inner products (here all the arrows denote vectors in $\RR^8$) 
\[
\vec x =(x_1,\dots, x_8), \; \vec y=(y_1, \dots, y_8) \in \RR^8
\]
and of the right translations $R_i, \dots, R_h$ as follows:
\[
\lambda_1 = 2 \vec x \cdot \vec y, \; \lambda_2 =-2 \vec x \cdot \vec{R_i y}, \; \dots, \; \lambda_8 =-2 \vec x \cdot \vec{R_h y}, \; \lambda_9 = \vert \vec x \vert^2 - \vert \vec y \vert^2.
\]

%\textcolor{red}{Acum verificam ca $VB$ este distributia normala la fibrele fibrarii Hopf octonionice: AICI ne t%rebuie formulele \ref{spin9B} precum si descrierea explicita a fibrarii Hopf octonionice de mai sus!!!!}

In particular, at points with $\vec x = \vec 0$, that is on the octonionic line $l_\infty$, the vector fields $\I_1 B, \dots, \I_9 B$ are orthogonal to the unit sphere $S_\infty^7 \subset l_\infty$. This latter is the fiber of the Hopf fibration $S^{15} \rightarrow S^8$ over the north pole $(0, \dots, 0,1) \in S^8$, and the mentioned orthogonality of this fiber $S^7$ is immediate from \ref{spin9B} for $\I_1 B, \dots, \I_8 B.$ 
%(\textcolor{red}{form formulas \ref{spin9B}}). 
Also, for these points, we have  $\I_9 B=B$, so  $\I_9B$ is ortohogonal to $S_\infty^7$. Now, the invariance under $\Spin{9}$ of the octonionic Hopf fibration shows that all its fibers are characterized as orthogonal in $\RR^{16}$ to the vector fields $\I_1 B, \dots, \I_9 B$. 

%\textcolor{red}{Abia acum trecem la verificarea faptului ca orice camp vertical are cel putin un zerou!}

Now, assume that $X$ is a vertical vector field of $S^{15} \rightarrow S^8$. By the previous characterization we have the following orthogonality relations in $\RR^{16}$:
\[
\langle X, \I_\alpha B \rangle =0, \qquad\text{for }\alpha=1, \dots, 9,
\]
and it follows that  $\langle \I_\alpha X, B \rangle=0$. But from the definition of a $\Spin{9}$-structure we see that if $\alpha \neq \beta$, then $\langle \I_\alpha X, \I_\beta X \rangle =0$. Thus, if $X$ is a nowhere zero vertical vector field, we would obtain in this way 9 pairwise orthogonal vector fields $\I_1 X, \dots, \I_9 X$, all tangent to $S^{15}$. But $S^{15}$ is known to admit at most 8 linearly independent vector fields by the classical Hurwitz-Radon-Adams result (see for example~\cite{HusFiB} or ~\cite{PaPSWM}). Thus $X$ cannot be vertical and nowhere zero, and \hyperlink{te:A}{Theorem~A} is proved.
\end{proofA}
 
One gets as a consequence the following alternative proof of a result  in~\cite{LoVHFS}:

\begin{co} \label{cor}
The octonionic Hopf fibration $S^{15} \rightarrow S^8$ does not admit any $S^1$ subfibration.
\end{co}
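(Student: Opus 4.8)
The plan is to derive the non-existence of an $S^1$-subfibration from \hyperlink{te:A}{Theorem~A} by a short argument with nowhere-zero vector fields. Suppose, for contradiction, that the octonionic Hopf fibration $S^{15}\to S^8$ admits an $S^1$-subfibration; that is, suppose there is a smooth foliation of $S^{15}$ by circles whose leaves are contained in the fibers $S^7$ of the Hopf map. First I would observe that such a foliation determines a unit tangent vector field $X$ on $S^{15}$: at each point $p\in S^{15}$, let $X_p$ be one of the two unit vectors tangent to the circle leaf through $p$. Since the circles are smooth submanifolds varying smoothly, $X$ is smooth locally; the only issue is the global choice of orientation along each leaf.

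To handle orientability, I would argue that each circle leaf carries a natural orientation, so that the choice of $X_p$ can be made consistently. Indeed, each leaf is a closed geodesic-like circle sitting inside a fiber $S^7\cong\mathbb O\setminus 0$ normalized; more robustly, one can use that $S^{15}$ is simply connected, so any line field (in particular the tangent line field to the $S^1$-foliation) is orientable, hence lifts to a genuine nowhere-zero vector field $X$. After rescaling, $X$ is a nowhere-zero vector field on $S^{15}$ which is everywhere tangent to the fibers of the Hopf fibration, i.e.\ a nowhere-zero vertical vector field.

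This directly contradicts \hyperlink{te:A}{Theorem~A}, which asserts that every global vector field on $S^{15}$ tangent to the fibers of $S^{15}\to S^8$ has at least one zero. Hence no $S^1$-subfibration can exist.

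\begin{proof}
Suppose $S^{15}\to S^8$ admitted an $S^1$-subfibration, i.e.\ a foliation $\mathcal L$ of $S^{15}$ whose leaves are circles each contained in some Hopf fiber $S^7$. The tangent lines to the leaves of $\mathcal L$ form a smooth line subbundle $\ell\subset TS^{15}$. Since $S^{15}$ is simply connected, this line bundle is orientable, so it admits a nowhere-vanishing section; normalizing, we obtain a unit vector field $X$ on $S^{15}$ with $X_p$ tangent to the leaf of $\mathcal L$ through $p$ for every $p$. As each leaf of $\mathcal L$ lies in a Hopf fiber, $X$ is everywhere tangent to the fibers of the octonionic Hopf fibration, hence is a vertical vector field. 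But $X$ is nowhere zero, contradicting \hyperlink{te:A}{Theorem~A}. Therefore no such subfibration exists.
\end{proof}

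The main obstacle here is purely the orientability bookkeeping: making sure the tangent line field of the $S^1$-foliation genuinely lifts to a global nowhere-zero vector field. This is resolved cleanly by invoking simple connectivity of $S^{15}$ (a line bundle over a simply connected base is trivial, so it has a global non-vanishing section). Once that is in place, the contradiction with \hyperlink{te:A}{Theorem~A} is immediate, and there is essentially no computation to perform.
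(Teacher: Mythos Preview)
Your proof is correct and follows essentially the same approach as the paper: both extract a tangent line subbundle from the putative $S^1$-subfibration, argue it is trivial (the paper via $w_1(L)\in H^1(S^{15};\ZZ_2)=0$, you via simple connectivity of $S^{15}$), and then use a nowhere-zero section to contradict \hyperlink{te:A}{Theorem~A}. The orientability justification is a cosmetic variant of the same cohomological fact.
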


\begin{proof}
In fact, any $S^1$ subfibration would give rise to a real line subbundle $L\subset T_{\text{vert}}(S^{15})$ of the vertical subbundle of $T(S^{15})$. Such line bundle $L$ is necessarily trivial, due to the vanishing of its first Stiefel-Whitney class $w_1(L)\in H^1(S^{15};\ZZ_2)=0$. It follows that $L$ would admit a nowhere zero section, thus a global vertical nowhere zero vector field.
%\hfill\qed
\end{proof}

\section{Locally conformally parallel $\Spin{9}$ manifolds}\label{se:lcp}

\begin{de}\label{de:lcp}
A Riemannian manifold $(M^{16},g)$ is \emph{locally conformally parallel} $\Spin{9}$ (\lcpnospace, briefly) if over open neighbourhoods
$\{U\}$ covering $M$ the restriction $g_{\vert U}$ of the metric $g$ is conformal to a (local) metric $g'_{U}$ having holonomy contained in $\Spin{9}$.
\hfill\qed
\end{de}

The conformality relations $g_{\vert U}=e^{f_U}g'_U$ give rise to a \emph{Lee form} $\theta$, locally defined as $\theta_{\vert U} \ug df_U$.
Next, recall that $\Spin{9}$ is characterized as the subgroup of $\GL{16, \RR}$ that preserves the 8-form $\spinform{9}$ (cf.\ the already quoted~\cite[page 170]{CorASH}). Thus, a $\Spin{9}$-structure on $M^{16}$ is equivalent to the datum of a $\Phi\in\Lambda^8(M),$ which can be locally written as in~\cite[Table~B]{PaPSAC} and, under the \lcp hypothesis, on each $U$ the metric $g'_U$ defines a similar $8$-form $\snform'_U$ parallel with respect to the Levi-Civita connection of $g'_U$. It follows that the restriction of $\Phi$ to $U$ satisfies
\[
\Phi_{\vert_{U}} = e^{4f_U} \snform'_U,
\]
henceforth one has

\[
d\Phi = \theta \wedge \Phi.
\]

Moreover, the Levi-Civita connections of the local parallel metrics $g'_{U}$ glue together to the global Weyl connection $D$ on $M$: 
\[
D_XY=\nabla_XY-\frac{1}{2} \{\theta(X)Y+\theta(Y)X-g(X,Y)B\},
\]
where $\nabla$ is the Levi-Civita connection of $g$. Recall that, since the metrics $g'_{U}$ are assumed to have holonomy contained in $\Spin{9}$, they are Einstein metrics. Thus the conditions $DV \subset V$, $Dg=\theta \otimes g$, $d\theta =0$ and $g'_{U}$ Einstein, insures that the conformal class $[g]$ defines a closed Einstein-Weyl manifold $(M, [g], D)$.

We will next give the

\hfill

\noindent\begin{proofB}\hypertarget{pr:proofB}{}
First, recall that $g$ defines a closed Einstein-Weyl structure on a compact manifold, with Lee form $\theta$ non exact (but closed). Then the following properties hold (cf.~\cite[page 10, Theorem 3]{GauSWE}): (a) its Weyl connection $D$ is Ricci-flat; (b) one can choose, in the conformal class $[g]$, a metric $g_0$ (unique up to homotheties) such that its Lee form $\theta_0$ is parallel with respect to the Levi-Civita connection $\nabla^{g_0}$ of $g_0$. 

Thus, to prove our statement we can assume, without loss of generality, that the Lee form $\theta$ of $g$ is parallel with respect to the Levi Civita connection $\nabla^{g}$.  Henceforth also its Lee field $B=\theta^\sharp$ is parallel and as a consequence, by de Rham decomposition theorem, the universal covering $(\tilde{M},\tilde{g})$ is reducible: 
\[
(\tilde{M},\tilde{g})=(\RR,dt^2)\times(\tilde{N},g_{\tilde N}),\quad\tilde{N}\text{ complete and simply connected}.
\]
With respect to this decomposition we have that the pull-back of $ \theta$ is $\tilde{\theta}=dt$. 
The diffeomorphism $\RR\times \tilde{N}\rightarrow \RR^+\times \tilde{N}$ given by
\[
(t, x)\mapsto (s=e^t, x)
\]
shows that  $(\tilde{M},\tilde{g})$ is globally conformal, with conformality factor $\frac{1}{s^2}$, to the so-called {\em metric cone} $$C(\tilde{N})=(\tilde{M},ds^2+s^2g_{\tilde N}).$$  

Using  the classical D.\ Alekseevsky theorem (\cite[Corollary 1 at page 98]{AleRSE}) we see that the Ricci-flatness of the local metrics (as mentioned, consequence of their holonomy contained in $\Spin{9}$ and of the Theorem of Gauduchon on closed Weyl structures), insures their flatness, so that the cone $C(\tilde{N})$ is flat. We can use then the relation between the curvature operator $R$ of the warped product $C(\tilde{N})=\RR^+\times_{s^2} \tilde{N}$ and the curvature operator $R^{\tilde{N}}$ of its fiber $\tilde{N}$:
\[
0=R_{V W} Z = R^{\tilde{N}}_{V W} Z - \frac{4}{s^2} (g_{\tilde N} (V,Z) W - g_{\tilde N} (W,Z) V)
\]
(see for example~\cite[page 210]{ONeSRG}) to recognize that $\tilde N$, being complete, is the sphere  $S^{15}$.
% This ends the proof of statement~\ref{te:st1}.
All of this insures that the universal covering of $M$ is conformally equivalent to the cone $C(S^{15})$ and, since the Lee vector field $B$ is parallel, that $M$ is locally isometric, up to homotheties, to $S^{15}\times\RR$.
% (cf.\ again~\cite[proof of Theorem 3]{GauSWE}).
This proves statement~\ref{te:st1}.

We now prove statement~\ref{te:st2}.
Denote by $\Theta$ the codimension 1 foliation on $M$ defined by the equation $\theta =0$, with $\theta =B^{\sharp}$, and note that the parallelism of $\theta$ insures that $\Theta$ is a totally geodesic foliation, so that the Levi-Civita connection on any leaf $T=T^{15}$ is just the restriction of $\nabla^{g}$. Next, consider the vector bundle $V=V^9\subset\End{TM}$ given by the $\Spin{9}$-structure, locally spanned by $\I_1,\dots,\I_9$, and the corresponding distribution
\[
VB\ug<\I_1 B,\dots,\I_9 B>\subset T(M)
\]
generated by the orthonormal vector fields $\I_1 B,\dots,\I_9 B$. Then $VB$ contains the Lee vector field $B$, as seen in the \hyperlink{pr:proofA}{proof of Theorem~A}.

We now show that the 8-dimensional distribution
\[
\mathcal F\ug<\I_1 B,\dots,\I_9 B>^\perp \oplus <B>=(VB)^\perp\oplus <B>
\]
is integrable.

%\newpage

First, let $X,Y \in (VB)^\perp$, so that $g(X, \I_\alpha B)=g(Y, \I_\alpha B)=0$ for $\alpha=1, \dots, 9$. Then, in terms of the Weyl connection $D$, we have
\[
g([X,Y], \I_\alpha B) = g(\I_\alpha (D_X Y),B)-g(\I_\alpha (D_Y X), B).
\]
Recall now that $DV \subset V$ gives rise to 1-forms $a_{\alpha \beta}$ such that $D \I_\alpha = \sum a_{\alpha \beta} \otimes \I_\beta$. It follows:
\[
g(\I_\alpha (D_X Y),B)=g( D_X (\I_\alpha Y),B)-g((D_X( \I_\alpha)  Y),B) = \]\[=g( D_X (\I_\alpha Y),B) - \sum a_{\alpha \beta} (X) g(\I_\beta Y, B),
\]
and since $g(\I_\beta Y, B)=0$, we obtain
\[
g(\I_\alpha (D_X Y),B) = g( D_X( \I_\alpha Y),B).
\]
On the other hand, since $\nabla B=0$ we have also $D_Y B = -\frac{1}{2}Y$ (cf.~\cite[page 37]{DrOLCK}). Thus, by applying $D$ to the identity $g(\I_\alpha X,B)=0$, we obtain
\[
0= Y(g(\I_\alpha X,B))=g(D_Y (\I_\alpha X),B) + g(\I_\alpha X,D_Y B)+\theta(Y)g(\I_\alpha X,B),
\]
so that
\[
g(D_Y(\I_\alpha X),B) = \frac{1}{2}g(\I_\alpha X,Y).
\]

All of this gives
\[
g([X,Y],\I_\alpha B) = g(D_X( \I_\alpha Y),B)-g(D_Y(\I_\alpha X), B) = \frac{1}{2}g(\I_\alpha Y,X) - \frac{1}{2}g(\I_\alpha X,Y)=0,
\]
so we get that $[X,Y] \in (VB)^\perp$.

Now, to obtain the integrability of $\mathcal F$, we must further check that for $X \in (VB)^\perp$ the bracket $[X,B] = D_X B-D_B X = -X -D_B X$ belongs to $\mathcal F$. In fact $D_B X= \nabla_B X - \frac{1}{2}X \in (VB)^\perp$, and $\nabla_B X  \in (VB)^\perp$ is a consequence of $g(X,B)=g(X, \I_\alpha B)=0$. This ends the proof of statement~\ref{te:st2}.

As for statement~\ref{te:st3}, one can use the same argument as in~\cite[Theorem 2.1]{OrPLCK}  to show that $\mathcal F$ is a Riemannian totally geodesic foliation and that the leaf space, when a manifold or an orbifold, carries a metric of spherical space form type.
\end{proofB}

\hfill

\noindent\begin{proofC}\hypertarget{pr:proofC}{}
Our arguments will follow basically the same ideas as in 
%\textcolor{red}{
\cite{OrVSTC},
%}
and we first show that the locally conformally parallel $\Spin{9}$ condition implies on compact manifolds the structure described by properties \eqref{it:structure1}, \eqref{it:structure2} and~\eqref{it:structure3}. 

In fact, if $(M,g)$ is compact and locally, non globally, conformally parallel $\Spin{9}$, recall from the proof of \hyperlink{pr:proofB}{Theorem~B} that its universal covering $(\tilde M,\tilde g)$ is conformally equivalent to the metric cone $C(S^{15})=\RR^{16}\setminus 0$ with conformal factor $\frac{1}{s^2}=e^{-2t}$, that is, the cone metric $g_{\text{cone}}$ is given by
\[
g_{\text{cone}}=e^{-2t}\tilde g.
\]
Any $\gamma\in\pi_1(M)$ can be thought as a map $\gamma:\tilde{M}\rightarrow\tilde{M}$ preserving $\tilde{g}$, and we get:
\begin{equation*}
\gamma^*(g_{\text{cone}})=\gamma^*(e^{-2t}\tilde{g})=(e^{-2t}\circ\gamma)\gamma^*(\tilde{g})=(e^{-2t}\circ\gamma)\tilde{g}=(e^{-2t}\circ\gamma)e^{2t}g_{\text{cone}},
\end{equation*}
showing that $\pi_1$ acts by conformal maps. 
Moreover, taking differentials of
\[
\gamma^*(\spinform{9})=(e^{-2t}\circ\gamma)e^{2t}\spinform{9}
\]
and using $d\spinform{9}=0$, we see that  $\pi_1(M)$ acts by homotheties.

Indeed, the homothety factor $\rho(\gamma)$ of $\gamma\in\pi_1(M)$ defines a homomorphism $\rho:\pi_1(M)\to\RR^+$, whose image is a finitely generated subgroup of $\RR^+$, thus isomorphic to $\ZZ^n$ for some $n\in\NN$. The locally conformal flatness of the metric allows to apply the arguments used to prove~\cite[Corollary 4.7]{GOPRVS}, and to see that the image of $\rho$ is isomorphic to $\ZZ$.

Next, notice that $K\ug\ker\rho$ consists of isometries of $C(S^{15})$ that leave the form $\spinform{9}$ invariant, so that in particular $K\subset\Spin{9}$.
Moreover, any isometry of $C(S^{15})$ induces the identity map on the $\RR^+$-component (see again~\cite[Theorem~5.1]{GOPRVS}), and it leaves the fibers of the projection $C(S^{15}) \rightarrow \RR^+$ invariant.
Since $S^{15}$ is compact and $\pi_1(M)$ acts properly discontinously and freely on $C(S^{15})$, $K$ is finite and without fixed points on $S^{15}$. It follows: 
\begin{equation*}
C(S^{15})/K=C\left(S^{15}/K\right).
\end{equation*}

Consider now a homothety $\gamma\in \pi_1(M)$ such that $h\ug\rho(\gamma)\in\RR^+$ generates $\II(\rho)$. Then $\gamma$ is a homotethy on $C(\frac{S^{15}}{K})$, and
\begin{equation}\label{eq:psiiso}
\gamma(s,x)=(h\cdot s,\psi(x)),\qquad\text{for }x\in\frac{S^{15}}{K},s\in\RR^+\text{ and }\psi\in\Isom{\frac{S^{15}}{K}}.
\end{equation}
Thus, for any $n\in\ZZ$ we have:
\begin{equation}\label{eq:psi}
\gamma^n(s,x)=(h^n\cdot s,\psi^n(x)).
\end{equation}

Consider the projection $\text{pr}:C(\frac{S^{15}}{K})\rightarrow\RR^+$ on the first factor of the cone. Then formula~\eqref{eq:psi} shows that $\text{pr}$ is equivariant with respect to the actions of $<\gamma>=\ZZ$ on $C(\frac{S^{15}}{K})$ and of $n\in\ZZ$ on $s\in\RR^+$, given by $h^n\cdot s$. The induced map
\begin{equation}\label{eq:structure}
M=\frac{C(\frac{S^{15}}{K})}{<\gamma>}\stackrel{\pi}{\longrightarrow}\frac{\RR^+}{\ZZ}=S^1
\end{equation}
is, up to rescaling the metric on $S^1$, the map in~\eqref{it:structure1} in the statement. Then~\eqref{it:structure2} follows. As for~\eqref{it:structure3}, observe that $\psi$ in formula~\eqref{eq:psiiso} comes from an element of $\SO{16}$ which preserves $\spinform{9}$.

To show that \eqref{it:structure1}, \eqref{it:structure2} and~\eqref{it:structure3} are necessary conditions for $M$ to be locally conformally parallel $\Spin{9}$, we use a topological argument. Assume that $(M,g)$ is a compact Riemannian manifold satisfying~\eqref{it:structure1}, \eqref{it:structure2} and~\eqref{it:structure3} in \hyperlink{te:C}{Theorem~C}, and consider two open sets $U_1$ and $U_2$ covering $S^1$. Then the definition of fibre bundle implies that $M$ can be recovered by glueing together $U_1\times (S^{15}/K)$ and $U_2\times (S^{15}/K)$ by a transition function $\psi_\pi:S^{15}/K\to S^{15}/K$. This transition function depends on $\pi$, and is usually called the \emph{clutching function} of the bundle. Moreover, \eqref{it:structure3} implies that $\psi\in N_{\Spin{9}}(K)$ is an isometry of $S^{15}/K$. 
Now choose $h\in\RR^+$, and use $\psi_\pi$, $h$ to define a homothety $\gamma_\pi$ on $C(S^{15}/K)$ as in formula~\eqref{eq:psiiso}:
\begin{equation*}
\gamma_\pi(s,x)\ug(h\cdot s,\psi_\pi(x)),\qquad\text{for }x\in\frac{S^{15}}{K}\text{ and }s\in\RR^+.
\end{equation*}
Then, let $M_\pi$ be the locally conformally parallel $\Spin{9}$ manifold
\[
M_\pi\ug\frac{C(\frac{S^{15}}{K})}{<\gamma_\pi>}.
\]
Since we already proved the sufficiency of conditions in \hyperlink{te:C}{Theorem~C}, we know that $M_\pi$ is itself a fiber bundle over $S^1$ with the same clutching map $\psi_\pi:S^{15}/K\to S^{15}/K$. Recall on the other hand that for any Lie group $G$, the equivalence classes of principal $G$ bundles over $S^n$ is in natural bijection with the homotopy group $\pi_{n-1}(G)$ (\cite[Theorem 18.5, page 99]{SteTFB}). 
Thus $M$ and $M_\pi$ are isomorphic as fiber bundles over $S^1$, and  in particular they are isometric. \end{proofC}

\begin{re}
Using the Galoisian terminology described in \cite[Section 2]{GOPRVS}, the pair
\[
(C(\frac{S^{15}}{K}),<\gamma>)
\]
is the minimal presentation of $M$.
\hfill\qed
\end{re}

\begin{re}
The fibers of $\pi$ in \hyperlink{te:C}{Theorem~C} inherit a 7-Sasakian structure (in the sense of~\cite{DeaNSM}) induced by the foliation $(VB)^\perp$ as in the \hyperlink{pr:proofB}{proof of Theorem~B}.
Indeed, this notion of 7-Sasakian structure on 15-dimensional spherical space forms seems to be the induced counterpart on the leaves of a canonical codimension one foliation on $M^{16}$. Note that, in accordance with~\cite{DeaNSM}, such a 7-Sasakian structure does not involve global vertical vector fields, but only a vertical foliation, whose transverse structure we have here related with the $\Spin{9}$-structure of $M^{16}$.\hfill\qed
\end{re}

The following is a different way of stating \hyperlink{te:C}{Theorem~C}.
%Letting $\Isom{S^{15}/K}$ be any lift of the isometry group of $S^{15}/K$ to $SO(16)$, we have 
\begin{co}\label{co:structure}
The set of isometry classes of locally, non globally, conformally parallel $\Spin{9}$ manifolds is in bijective correspondence with the set of triples
\[
\{(r,K,c_K)\st r\in\RR^+, K\leq\Spin{9} \text{ finite and free on }S^{15}, c_K\in\pi_0\left(N_{\Spin{9}}(K)\right)\},
\]
where $\pi_0$ stands for the connected component functor.
\hfill\qed
\end{co}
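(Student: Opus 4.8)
The plan is to recast Theorem~C as a bijection by assembling its three structural items into a single invariant triple $(r,K,c_K)$ and checking that the assignment is well-defined and injective in both directions. First I would extract, from the proof of Theorem~C, the data attached to a given locally (non globally) conformally parallel $\Spin{9}$ manifold $M$: the radius $r$ of the base circle $S^1_r$ in~\eqref{it:structure1}, the finite subgroup $K\leq\Spin{9}$ acting freely on $S^{15}$ appearing as the fibre $S^{15}/K$ in~\eqref{it:structure2}, and the clutching/transition data of $\pi$, which by~\eqref{it:structure3} lives in $N_{\Spin{9}}(K)$ and is classified, as a principal-type bundle over $S^1$, by $\pi_0(N_{\Spin{9}}(K))$; this last identification is exactly the content of the final paragraph of the proof of Theorem~C, via~\cite[Theorem 18.5]{SteTFB}, since $\pi_1(S^1)$-bundles correspond to $\pi_0$ of the structure group. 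So the map $M\mapsto(r,K,c_K)$ is defined.

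Next I would verify surjectivity: given any triple $(r,K,c_K)$ with $K\leq\Spin{9}$ finite and free on $S^{15}$ and $c_K\in\pi_0(N_{\Spin{9}}(K))$, choose a representative $\psi\in N_{\Spin{9}}(K)$ of $c_K$ and a homothety factor $h\in\RR^+$ determined by $r$ (via $r=$ the length $\RR^+/\langle h\rangle$ up to the metric normalization fixed in~\eqref{eq:structure}), form $\gamma(s,x)=(h\cdot s,\psi(x))$ on $C(S^{15}/K)$, and set $M=C(S^{15}/K)/\langle\gamma\rangle$. By the sufficiency half of Theorem~C this $M$ is locally (non globally) conformally parallel $\Spin{9}$, and by construction its associated triple is $(r,K,c_K)$. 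Then I would check injectivity: if two such manifolds $M$, $M'$ give the same triple, then they have the same base radius, isometric fibres $S^{15}/K=S^{15}/K'$ (so $K=K'$ after conjugating inside $\Spin{9}$, which does not change the isometry class), and the same clutching class $c_K$; by the argument at the end of the proof of Theorem~C, equal clutching classes in $\pi_0(N_{\Spin{9}}(K))$ force $M$ and $M'$ to be isomorphic as fibre bundles over $S^1$ and hence isometric, since both are obtained from the same cone construction. Conversely an isometry $M\to M'$ lifts to the universal covers and, by the minimal-presentation uniqueness of~\cite[Section 2]{GOPRVS} (cf.\ the preceding Remark), matches up the cone data, hence the triples.

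The main obstacle I expect is the bookkeeping around the invariant $K$: a priori $K$ is only defined up to conjugation in $\Spin{9}$ (different choices of universal-cover identification, or of the splitting $C(S^{15}/K)=C(S^{15})/K$), so "$K\leq\Spin{9}$ finite and free on $S^{15}$" in the statement must be read as a conjugacy class, and one must check that $N_{\Spin{9}}(K)$, hence $\pi_0(N_{\Spin{9}}(K))$, is well-defined independent of the representative — which it is, since conjugate subgroups have conjugate, hence isomorphic, normalizers. The rest is essentially a repackaging of the proof of Theorem~C together with the standard classification of $G$-bundles over $S^1$ by $\pi_0(G)$, so no substantial new computation is needed; the corollary is really a reformulation, and I would present it as such, citing Theorem~C and the Galoisian minimal-presentation formalism of~\cite{GOPRVS} for the uniqueness statements.
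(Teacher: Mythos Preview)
Your proposal is correct and matches the paper's approach: the paper presents this corollary without proof, introducing it simply as ``a different way of stating \hyperlink{te:C}{Theorem~C}'' and closing with a \qed, so all the work is delegated to Theorem~C and the bundle-over-$S^1$ classification already invoked there. Your write-up spells out exactly this repackaging --- extracting $(r,K,c_K)$ from items~\eqref{it:structure1}--\eqref{it:structure3}, using the clutching argument and~\cite[Theorem~18.5]{SteTFB} for the $\pi_0$ identification, and noting the conjugacy ambiguity in $K$ --- which is more detail than the paper gives but entirely in the same spirit.
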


\begin{re}
We could also describe the map $\pi:M\rightarrow S^{1}$ in \hyperlink{te:C}{Theorem~C} as the Albanese map defined as follows. Fix any $x_0\in M$. For any $x\in M$ and any path $\gamma$ joining $x_0$ and $x$, define: 
%$\alpha:M\rightarrow S^1=\RR/P$ by
\[
\alpha(x)\ug\left(\int_\gamma \theta \right)\mod G.
\]
Here $\theta$ is the Lee form of $M$, and $G\subset\RR$ is the additive subgroup of ``periods of $\theta$'', generated by the integrals $\int_{\sigma}\theta$ over the generators $\sigma$ of $H_1(M,\ZZ)$.
\hfill\qed
\end{re}

\section{Examples}

As a consequence of \hyperlink{te:B}{Theorem~B}, the examples will be in the context of the flat $\Spin{9}$-structure on $\RR^{16}$. Recalling the threefold approach to $\Spin{9}$-structures given by Definition \ref{de:spin9structure}, we refer to the data $V=V^9$, $\spinform{9}$, $\mathcal{R}$ and $\Spin{9}$ as the \emph{standard data}, and the standard inclusion $\SO{16}\subset\GL{16,\RR}$ can be viewed as equivalent to the choice of the standard basis $\{e_1,\dots,e_{16}\}$ of $\RR^{16}$ as orthonormal. Thus, another way to describe the flat $\Spin{9}$-structure on $\RR^{16}$ is the \emph{standard structure with respect to the standard basis} $\{e_1,\dots,e_{16}\}$.

Thus, if we choose a different basis $\mathcal{B}$ on $\RR^{16}$ that we declare to be orthonormal in a suitable metric $g_\mathcal{B}$, we can talk about the \emph{standard structure with respect to $\mathcal{B}$}. This means that we are choosing a different inclusion $i:\SO{16}\hookrightarrow\GL{16, \RR}$, but the structure is still standard in the sense that $V$, $\spinform{9}$ and $\mathcal{R}$ are induced by the standard ones using the inclusion $i$.

Observe that this holds even if the inclusion $i$ depends on the point $x\in\RR^{16}$, that is if $\mathcal{B}$ is not a basis on the vector space $\RR^{16}$, but a parallelization on the manifold $\RR^{16}$. In the same way, on any parallelizable $M^{16}$ with a fixed parallelization $\mathcal{B}$ one can speak of the \emph{standard $\Spin{9}$-structure on $M$ associated with $\mathcal{B}$}, whose associated objects will be denoted by $V_\mathcal{B}$, $\snform_\mathcal{B}$, $\mathcal{R}_\mathcal{B}$ and $g_\mathcal{B}$ (see~\cite{ParONS} and~\cite{ParHSS} for details).

\begin{ex}\label{ex:standard}
On $\RR^{16}\setminus 0$, consider the parallelization $\tilde{\mathcal{B}}\ug\{|x|\partial_1,\dots,|x|\partial_{16}\}$ where $\partial_1,\dots,\partial_{16}$ denotes the derivatives with respect to the standard coordinates. Look at the map
\[
p:\RR^{16}\setminus 0\longrightarrow S^{15}\times S^1, \; p(x)\ug(x/|x|,\log|x|\mod{2\pi}),
\]
and observe that $p$ projects $\tilde{\mathcal{B}}$ to a parallelization $\mathcal{B}\ug p_*(\tilde{\mathcal{B}})$ on $S^{15}\times S^1$ (see also~\cite[sections 6 and 7]{BruPEP} and~\cite{ParEPP}). Consider the standard $\Spin{9}$-structure $g_\mathcal{B}$ on $S^{15}\times S^1$ associated with $\mathcal{B}$. Then $g_\mathcal{B}$ is locally conformally parallel, since $p$ is a covering map, bundle-like by definition, so that $g_\mathcal{B}$ is locally given by $g_{\tilde{\mathcal{B}}}$, that is to say, by $|x|^{-2}g$, where $g$ is the flat metric on $\RR^{16}$.

As observed in \hyperlink{te:B}{Theorem~B}, the flat metric on $\RR^{16}\setminus 0$ is the cone metric on $C(S^{15})$. The metric $g_{\tilde{\mathcal{B}}}$ is instead the cylinder metric on the Riemannian universal covering of $S^{15}\times S^1$.
\hfill\qed
\end{ex}

\begin{re}\label{re:victor}
In~\cite{FriWSS} and~\cite{AgFGSV} the class of locally conformally parallel $\Spin{9}$-structures has been identified and studied, under the name of ``$\Spin{9}$-structures of vectorial type'' (cf.~the following Definition \ref{vectorial type}). We outline now a proof that, \emph{for $\Spin{9}$-structures, vectorial type is equivalent to locally conformally parallel.}

Following ~\cite{FriWSS} and~\cite{AgFGSV}, one can look at the splitting of the Levi-Civita connection in the principal bundle of orthonormal frames on $M$: 
\[
\nabla=\nabla^*\oplus\theta
\]
where $\nabla^*$ is the connection in the induced bundle of $\Spin{9}$-frames and $\theta$ is its orthogonal complement. Thus, $\theta$ is a 1-form with values in the ortogonal complement $\lie{m}$ defined by the splitting $\lieso{16}=\liespin{9}\oplus\lie{m}$ and, under canonical identifications, $\theta$ can be seen as a 1-form with values in $\Lambda^3(V)$.

Under the action of $\Spin{9}$, the space $\Lambda^1(M)\otimes\Lambda^3(V)$ decomposes as a direct sum of 4 irreducible components:
\[
\Lambda^1(M)\otimes\Lambda^3(V)=P_0\oplus P_1\oplus P_2\oplus P_3,
\]
and, looking at all the possible direct sums,  this yields 16 types of $\Spin{9}$-structures. The component $P_0$ identifies with $\Lambda^1(M)$. Thus:
\begin{de}\cite{AgFGSV}\label{vectorial type}
A $\Spin{9}$-structure is of \emph{vectorial type} if $\theta$ lives in $P_0$.
\end{de}

Now, let $(M,g)$ be a Riemannian manifold endowed with a $\Spin{9}$-structure of vectorial type. Let $\theta$ be as above, and let $\snform$ be its $\Spin{9}$-invariant 8-form. Now, $\theta=0$ implies that the holonomy of $M$ is contained in $\Spin{9}$ (cf.~\cite[page 21]{FriWSS}).

From~\cite[page 5]{AgFGSV} we know that the following relations hold:
\begin{equation}\label{eq:friedrich}
d\snform=\theta\wedge\snform,\qquad d\theta=0.
\end{equation}
Let $(M,\tilde{g})$ be the Riemannian universal cover of $(M,g)$ and let $\tilde{\snform}$, $\tilde{\theta}$ be the lifts of $\snform$, $\theta$ respectively. Then relations ~\eqref{eq:friedrich} hold as well for $\tilde{\snform}$ and $\tilde{\theta}$. Since $\tilde{M}$ is simply connected, then $\tilde{\theta}=df$, for some $f:\tilde{M}\rightarrow\RR$. Then, defining $g_0\ug e^{-f}\tilde{g}$ and $\snform_0\ug e^{-4f}\tilde{\snform}$, we have $d\snform_0=0$, that is the $\theta$-factor of $\snform_0$ is zero. Hence $g_0$ has holonomy contained in $\Spin{9}$, and on the other hand it is locally conformal to $g$. Thus $M$ can be covered by open subsets on which the metric is conformal to a metric with holonomy in $\Spin{9}$, which is Definition~\ref{de:lcp}.
\hfill\qed
\end{re}

\begin{re}
With the notations of \hyperlink{te:C}{Theorem~C}, the locally conformally parallel $\Spin{9}$-structure on $S^{15}\times S^1$ defined in Example~\ref{ex:standard} is associated with $K=\{\Id\}\subset\Spin{9}$. Since $N_{\Spin{9}}(K)=\Spin{9}$ is connected, there is only one locally conformally parallel $\Spin{9}$-structure on $S^{15}\times S^1$ (see Corollary~\ref{co:structure}). Thus, Example~\ref{ex:standard} is an alternative description of the $\Spin{9}$-structure of vectorial type  on $S^{15}\times S^1$ given in~\cite[example~2 at page~136]{FriWSS}, where the terminology ``$W_4$ type'' is used for vectorial type. See also~\cite{AgFGSV}.
\hfill\qed
\end{re}

\begin{ex}
According to Theorems~\hyperlink{te:B}{B} and~\hyperlink{te:C}{C}, to give examples of compact locally conformally parallel $\Spin{9}$ manifolds, one has to look at finite subgroups of $\Spin{9}$ acting without fixed points on $S^{15}$. The classification of such finite subgroups is not an easy problem, and we limit ourselves to exhibit some of them. They will show however that many finite quotients of $S^{15}$ may appear as fibers in the map of \hyperlink{te:C}{Theorem~C}. 

We describe in particular how $S^{15}$ is acted on ``diagonally'' and without fixed points by a subgroup $\Spdiag{1} \subset \Spin{9}$. 
Let $(x=h_1 +h_2 e, x'=h'_1+h'_2 e) \in \OO^2$ and define the following action of $q \in \Sp{1}$ on the \emph{first} octonionic coordinate $x=h_1 +h_2 e \in \OO$:
\[ 
A_q: \; h_1 +h_2 e \longrightarrow h_1 q + (\bar q h_2) e.
\]
Due to the identity $\overline{q_1 q_2} = \overline q_2 \overline q_1$, this is a right action $A_q: \OO \rightarrow \OO$ for each $q \in \Sp{1}$. In the real components of $x$, $A_q$ is represented by a matrix of $\SO{8}$, and indeed by a matrix in its diagonal $\SO{4} \times \SO{4}$ subgroup. 

Recall now the Triality Principle for $\SO{8}$. In the formulation we need here it can be stated as follows (cf. \cite[page 192] {GWZGHF} or \cite[pages 143-145]{DeSSTT}). 
\begin{triality}
Consider the triples $A,B,C \in \SO{8}$ such that for any $x,m \in \OO$:
\[
C(m)A(x)  = B(mx). 
\]
If any of $A,B,C$ is given, then the other two exist and are unique up to changing sign for both of them.
\end{triality}

Given $A \in \SO{8}$ we will call any of such matrices $\pm B, \pm C$ a \emph{triality companion} of $A$. 

Going back to the transformation $A_q \in \SO{8}$ defined by any $q \in \Sp{1}$, consider a pair $B_q, C_q \in \SO{8}$ of its triality companions. Thus, for any $x,m \in \OO$:
\[
C_q(m)A_q(x)  = B_q(mx),
\]
and define the following right action of $q \in \Sp{1}$ on $ \OO^2$:
\[
R_q: (x= h_1 +h_2 e, x'=h'_1+h'_2 e) \rightarrow (A_q x, B_q x').
\]
Thus, $R_q$ carries octonionic lines to octonionic lines, so that $R_q \in \Spin{9}$. In this way, 
a ``diagonal'' subgroup $\Spdiag{1} \subset \Spin{9}$ is defined, and $\Spdiag{1}$ is indeed a subgroup of the $\Spin{8} \subset \Spin{9}$ defined by triples $(A,B,C) \in \SO{8} \times \SO{8} \times \SO{8}$ obeying to the triality principle.

This action is without fixed points on $S^{15}$: from $A_q x  =  h_1q + \overline q h_2 e = h_1 + h_2 e, q\neq 1$ follows $h_1 = h_2 =0$, so that the only fixed points of $R_q$ could be on the unit sphere $S^7_\infty$ of the octonionic line $l_\infty$, on which we are acting by the triality companion $B_q$. Now, if $x' \in S^7_\infty$ is a fixed point of $B_q$, so is $-x'$ and then $B_q$ has to belong to a $\SO{7}$ subgroup of $\SO{8}$, rotating the equator of $S^7_\infty$ with respect to the poles $x'$ and $-x'$. But then the triple $(A_q,B_q,C_q)$ belongs to a $\Spin{7}$ subgroup of $\Spin{8}$ and hence any of $A_q,B_q,C_q$ has to belong to a $\SO{7} \subset \SO{8}$ (cf.~\cite[page 194] {MurES1}). Recall on the other hand that any subgroup $\SO{7} \subset \SO{8}$, when acting on the sphere $S^7$, admits a fixed point and it is conjugate with the standard $\SO{7}$ (cf.~\cite[Lemma 4, page 168]{VarSSS}). This is a contradiction , since $A_q$ has no fixed points.

We can now consider finite subgroups of $\Spdiag{1}$. Recall that any finite subgroup of $\Sp{1}$ is isomorphic to either a cyclic group or to the binary dihedral, tetrahedral, octahedral, or icosahedral group (see for instance \cite[Section 6.5]{CoxRCP}).
In the following, we associate a subgroup of $\Spdiag{1}$ with every group in the list of abstract finite subgroups of $\Sp{1}$.

\begin{itemize}
\item The cyclic group $\cyclic{n}=<a|a^n=1>$, for $n\ge 1$.
We can choose as generator $R_a$, where $a=e^{\frac{2\pi i}{n}}$.
% and $<R_q>=\cyclic{n}$.
\item The binary dihedral group $\bid{n}=<a,b|a^{2n}=1,b^2=a^n,b^{-1}ab=a^{-1}>$, for $n\ge 1$.
Choose here as generators $R_a,R_b$ with $a=e^{\frac{\pi i}{n}}$ and $b=j$. 
%It follows $<R_{q_1},R_{q_2}>=\bid{n}$.
\item The binary tetrahedral group $\bit=<a,b,c|a^2=b^3=c^3=abc>$.
Choose now generators $R_a,R_b,R_c$ with $b=\frac{1+i+j+k}{2}$, $c=\frac{1+i+j-k}{2}$ and $a=bc$. 
%Then $<R_{q_1},R_{q_2},R_{q_3}>=\bit$.
\item The binary octahedral group $\bio=<a,b,c|a^2=b^3=c^4=abc>$.
Choose generators $R_a,R_b,R_c$ with $b=-\frac{1+i+j+k}{2}$, $c=\frac{1+i}{\sqrt{2}}$ and $a=bc$. 
%Then $<R_{q_1},R_{q_2},R_{q_3}>=\bio$.
\item The binary icosahedral group $\bii=<a,b,c|a^2=b^3=c^5=abc>$.
Let $\varphi\ug\frac{1+\sqrt{5}}{2}$ be the golden ratio. Choose generators $R_a,R_b,R_c$ where now $b=\frac{1+i+j+k}{2}$, $c=\frac{\varphi+\varphi^{-1}i+j}{2}$ and $a=bc$. 
%Then $<R_{q_1},R_{q_2},R_{q_3}>=\bii$.
\end{itemize}
%~ \comment{i valori di $q_1,q_2,q_3$ di cui sopra sono presi dalle pagine di wikipedia dei corrispondenti gruppi. non ho ancora rifatto i conti, sara' comunque meglio verificare le relazioni}
%~ 
%~ \comment{questi sono tutte le possibilita', fare i prodotti con il ciclico dispari non crea nuovi sottogruppi perche' $\cyclic{2n+1}$ e $\bid{m}$, $\bit$, $\bio$, $\bii$ non commutano! si potrebbe prendere il sottogruppo di $\Spin{9}$ generato da questi prodotti, ma il risultato viene molto probabilmente infinito...}
Since any finite subgroup of $\Sp{1}$ is conjugate to one in the previous list, this classifies all locally conformally parallel $\Spin{9}$ manifolds such that $K=\ker \rho$ in \hyperlink{te:C}{Theorem~C} is contained in $\Spdiag{1}$.\hfill\qed
\end{ex}

\begin{re}
The Lee vector field on a locally conformally parallel $\Spin{9}$ manifold $M$ is never vanishing (see \hyperlink{pr:proofB}{proof of Theorem~B}). By~\cite[Proposition~1]{FriWSS} this means that $M$ admits a $\Spindelta{7}$-structure (in the sense of~\cite{FriWSS}). Thus, the classification of isometry types of $M$ reduces to the finding of finite subgroups of $\Spindelta{7}\subset\Spin{9}$ acting without fixed points on $S^{15}$.
\hfill\qed
\end{re}

{\textbf{Acknowledgements}} The authors thank Rosa Gini for her helpful support, 
and the two referees for their useful comments.

\end{document}